\newtheorem{theorem}{Theorem}[section]
\newtheorem{lemma}[theorem]{Lemma}
\newtheorem{corollary}[theorem]{Corollary}
\theoremstyle{definition}
\theoremstyle{remark}
\newtheorem{example}[theorem]{Example}
\def\ov{\overline}
\title[The structure of a finite group and the maximum $\pi$-index of its elements]{The structure of a finite group and the maximum $\pi$-index of its elements}
\author{A-Ming Liu}
\address{School of Mathematics and Statistics, Hainan University, Haikou 570228, Hainan, P. R. China}
\email{amliu@hainanu.edu.cn}
\author{Andrey V. Vasil'ev}
\address{Sobolev Institute of Mathematics, Novosibirsk 630090, Russia}
\email{vasand@math.nsc.ru}
\begin{document}

\begin{abstract}
Given a set of primes $\pi$, the {\em$\pi$-index} of an element $x$ of a finite group $G$ is the $\pi$-part of the index of the centralizer of $x$ in~$G$. If $\pi=\{p\}$ is a singleton, we just say the {\em$p$-index}. If the $\pi$-index of $x$ is equal to $p_1^{k_1}\ldots p_s^{k_s}$, where $p_1,\ldots,p_s$ are distinct primes, then we set $\epsilon_\pi(x)=k_1+\ldots+k_s$. In this short note, we study how the number $\epsilon_\pi(G)=\max\{\epsilon_\pi(x):x\in G\}$ restricts the structure of the factor group $G/Z(G)$  of $G$ by its center. First, for a finite group $G$, we prove that  $\phi_p(G/Z(G))\leq\epsilon_p(G)$, where  $\phi_p(G/Z(G))$ is the Frattini length of a Sylow $p$-subgroup of $G/Z(G)$. Second, for a $\pi$-separable finite group $G$, we prove that $l_\pi(G/Z(G))\leq\epsilon_\pi(G)$, where $l_{\pi}(G/Z(G))$ is the $\pi$-length of $G/Z(G)$.\smallskip

\noindent\textsc{Keywords:} finite group, conjugacy classes, Frattini length, $\pi$-separable group, $\pi$-length.
\smallskip

\noindent\textsc{MSC:} 20E45, 20D20.
\end{abstract}

\maketitle

\section{Introduction}\label{s:intro}

Given a finite group $G$ and $x\in G$, we refer to the size $|x^G|$ of the conjugacy class $x^G$ as the {\em index} of $x$, motivating by the well-known fact that it is equal to the index of the centralizer $C_G(x)$ of $x$ in $G$. At the very beginning of the 20th century, Burnside proved that a group of composite order, having an element whose index is a prime power, cannot be simple; it was the key ingredient in his famous theorem on the solvability of finite $\{p,q\}$-groups~\cite{Burnside}. Since then, the analysis of connection between the structure of a group and the properties of the indices of its elements has become a popular topic in finite group theory. The presented short note is within this research field. We are particulary interested in those properties of a group that follow from the restrictions on $\pi$-parts of the indices, where $\pi$ is some set of primes. All groups below are assumed to be finite.

To proceed, we need the following notation. For a natural number~$n$ and a set of primes~$\pi$, let us denote by $n_\pi$ the $\pi$-part of $n$ and by $n_{\pi'}$ the $\pi'$-part of $n$, that is, if $n=p_1^{k_1}\ldots p_s^{k_s}q_1^{m_1}\ldots q_t^{m_t}$, where $p_1,\ldots,p_s\in\pi$ and $q_1\ldots,q_t\in\pi'$, then $n_\pi=p_1^{k_1}\ldots p_s^{k_s}$ and  $n_{\pi'}=q_1^{m_1}\ldots q_t^{m_t}$. The {\em$\pi$-exponent} of~$n$ is the number $\exp_\pi(n)=k_1+\ldots+k_s$. If the set $\pi$ contains only one prime~$p$, then we say the $p$-part, $p'$-part, and {\em$p$-exponent} of~$n$.

For a group $G$, element $x\in G$, and set of primes $\pi$, we refer to the number $|x^G|_\pi$ as the \emph{$\pi$-index} of $x$ in~$G$. Put $\epsilon_\pi(x)=\exp_\pi(|x^G|)$ and $\epsilon_\pi(G)=\max\{\epsilon_\pi(x):x\in G\}$. In the case $\pi=\{p\}$, the $\pi$-index is called {\em$p$-index} of $x$ in $G$, $\epsilon_p(x)=\exp_p(|x^G|)$, and $\epsilon_p(G)=\max\{\epsilon_p(x):x\in G\}$.

As usual, $\Phi(G)$ stands for the Frattini subgroup of $G$, that is the intersection of all maximal subgroups of $G$. Put $\Phi_0(G)=G$, and for every positive integer $m$, define $\Phi_m(G)=\Phi(\Phi_{m-1}(G))$, the {\em$m$th Frattini subgroup} of $G$. Set $\phi_p(G)$ for the least integer $m$ such that $\Phi_m(P)=1$ for a Sylow $p$-subgroup $P$ of $G$.

We are ready to formulate the first main result of the paper.

\begin{theorem}\label{t:frattini}
If $G$ is a finite group, then $\phi_p(G/Z(G))\leq\epsilon_p(G)$.
\end{theorem}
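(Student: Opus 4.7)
The plan is to induct on $k = \epsilon_p(G)$. For the base case $k = 0$, every conjugacy class of $G$ has $p$-part equal to $1$, and a classical result of It\^o then forces a Sylow $p$-subgroup $P$ of $G$ to lie in $Z(G)$; hence $\bar P = PZ(G)/Z(G)$ is trivial, $\phi_p(G/Z(G)) = 0$, and the theorem holds.

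For the inductive step, assume $k \ge 1$ and that the result is known for all groups of $p$-exponent strictly less than $k$. Write $\bar G = G/Z(G)$, fix a Sylow $p$-subgroup $\bar P$ of $\bar G$, and set $m = \phi_p(\bar G)$. The plan is to produce a normal subgroup $N$ of $G$ containing $Z(G)$ such that
\[
\epsilon_p(G/N) \le k - 1 \qquad \text{and} \qquad \phi_p\bigl((G/N)/Z(G/N)\bigr) \ge m - 1.
\]
Applying the inductive hypothesis to $G/N$ then yields $m - 1 \le k - 1$, i.e., $m \le k$, as required. The natural candidate for $N$ is the preimage in $G$ of a minimal $\bar G$-invariant subgroup of $\bar P$ contained in the last non-trivial Frattini term $\Phi_{m-1}(\bar P)$. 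Since the image of $N$ in $\bar G$ is then contained in $\Phi_{m-1}(\bar P)$, quotienting $\bar G$ by it collapses at most the bottom layer of the Frattini chain of $\bar P$, and the second inequality above should follow essentially for free.

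The main obstacle will be the first inequality: showing that $\epsilon_p$ strictly drops upon passing from $G$ to $G/N$. Concretely, for each $x \in G$ realising $|x^G|_p = p^k$ one must exhibit an element of $N$ whose image in $\bar G$ lies in $\Phi_{m-1}(\bar P)$ and which centralizes $xN$ in $G/N$ without centralizing $x$ in $G$; such an element absorbs (at least) one factor of $p$ from the centralizer index of $x$. I expect this to follow from a localised It\^o-type argument inside a suitable section of $C_G(x)$ combined with the Frattini structure of $\bar P$, but identifying the right $N$ and ruling out the possibility that $\epsilon_p$ stays equal to $k$ after the quotient is the technical heart of the argument.
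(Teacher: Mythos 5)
Your base case is fine (the classical fact you need is that if $p$ divides no conjugacy class size then a Sylow $p$-subgroup is central), but the inductive step breaks down before its self-declared ``technical heart'' is even reached: the subgroup $N$ you propose need not exist. A nontrivial normal subgroup of $\bar G$ contained in $\bar P$ must lie in $O_p(\bar G)$, and there is no reason for $O_p(\bar G)$ to be nontrivial. For example, take $G=\mathrm{PSL}(2,7)$ and $p=2$: here $Z(G)=1$, $O_2(G)=1$, and $\bar P\cong D_8$ has Frattini length $m=2$, so the only $\bar G$-invariant subgroup of $\bar P$ inside $\Phi_{1}(\bar P)$ is trivial, $N=Z(G)$, and the induction does not advance (the theorem itself holds: $\epsilon_2(G)=3\ge 2$). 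Even where $N$ exists, the inequality $\epsilon_p(G/N)\le k-1$, which you acknowledge is unproved, is false for general normal subgroups: a guaranteed strict drop of the $p$-index requires that $C_G(x)\cap N$ be proper in $N$ for the relevant $x$ (compare the hypothesis $C_G(N)\le N$ in Lemma~\ref{l:indineq}), and a subgroup whose image sits inside $\Phi_{m-1}(\bar P)$ is typically small and nearly central, i.e., exactly the kind of $N$ that centralizers fail to intersect properly. The companion claim $\phi_p\bigl((G/N)/Z(G/N)\bigr)\ge m-1$ is also unjustified, since $Z(G/N)$ may be strictly larger than the image of $N$ and absorb further layers of the Sylow subgroup.

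No induction on $\epsilon_p$ is needed; the intended argument is direct. With $\epsilon_p(G)=m$ and a fixed Sylow $p$-subgroup $P$, every conjugacy class has a representative $z$ with $|P:P\cap C_G(z)|\le p^m$ (conjugate a Sylow $p$-subgroup of $C_G(z)$ of maximal intersection into $P$, as in Lemma~\ref{l:fixP}); in a $p$-group any subgroup of index at most $p^m$ contains the $m$th Frattini subgroup (Lemma~\ref{l:indexandfrat}), so $\Phi_m(P)\le C_G(z)$ for such representatives. Since a full set of class representatives generates $G$ (Burnside, Lemma~\ref{l:reps}), this forces $\Phi_m(P)\le Z(G)$, and then $\Phi_m(PZ/Z)=\Phi_m(P)Z/Z=1$ gives $\phi_p(G/Z(G))\le m$. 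If you want to salvage your strategy, this is the mechanism you would have to rediscover; the quotient-by-$N$ route does not lead there.
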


It is worth noting that Theorem~\ref{t:frattini} generalizes \cite[Lemma~4]{09Vas}. See some other generalizations of that lemma in~\cite{22Go,23VG}.

Let $d_p(G)$ denote the derived length of a Sylow $p$-subgroup $P$ of a group $G$, and let $e_p(G)$ be the integer such that $p^{e_p(G)}$ is the maximum order of elements in $P$.     Since $d_p(G)\leq\phi_p(G)$ and $e_p(G)\leq\phi_p(G)$ (see Lemma~\ref{l:de} below), the following holds.

\begin{corollary}\label{c:p}
If $G$ is a finite group, then  $d_p(G/Z(G))\leq\epsilon_p(G)$ and $e_p(G/Z(G))\leq\epsilon_p(G)$.
\end{corollary}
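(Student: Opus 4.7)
The plan is to derive the corollary as an immediate consequence of Theorem~\ref{t:frattini} combined with the two $p$-group inequalities $d_p(H)\leq\phi_p(H)$ and $e_p(H)\leq\phi_p(H)$ (recorded as Lemma~\ref{l:de} and cited in the paragraph introducing the statement). I would apply both inequalities to the finite group $H=G/Z(G)$ to obtain
\[
d_p(G/Z(G))\leq\phi_p(G/Z(G)) \quad \mbox{and} \quad e_p(G/Z(G))\leq\phi_p(G/Z(G)),
\]
and then invoke Theorem~\ref{t:frattini} to conclude $\phi_p(G/Z(G))\leq\epsilon_p(G)$. Chaining these inequalities yields both bounds asserted in the corollary, so no further work is required.

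I expect no genuine obstacle once Theorem~\ref{t:frattini} and Lemma~\ref{l:de} are in hand. The substance lies entirely in those two results: the theorem supplies the central bridge between the arithmetic invariant $\epsilon_p(G)$ and the Frattini-length invariant of a Sylow $p$-subgroup of $G/Z(G)$, while the lemma records the standard $p$-group facts that, because $\Phi(P)=P'P^p$ for a Sylow $p$-subgroup $P$, an easy induction delivers $P^{(i)}\leq\Phi_i(P)$ and $x^{p^i}\in\Phi_i(P)$ for every $x\in P$. Both transitional inequalities pass through the single intermediate invariant $\phi_p(G/Z(G))$, so the proof of the corollary is effectively a one-line composition of cited results, with all of the substantive group-theoretic work already carried out in establishing Theorem~\ref{t:frattini}.
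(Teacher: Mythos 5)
Your proposal is correct and matches the paper exactly: the corollary is obtained by applying Lemma~\ref{l:de} to the group $G/Z(G)$ and chaining the resulting inequalities with Theorem~\ref{t:frattini}. No further comment is needed.
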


For a set of primes $\pi$, a group $G$ is said to be {\em$\pi$-separable}, if $G$ has a normal series each of whose factors is either a $\pi$-group or $\pi'$-group. Among all such normal series there is the special one (it is shortest, in particular) called the {\em upper $\pi$-series} and defined as follows:
\begin{equation}\label{eq:r}
1=P_0\leqslant K_0<P_1< K_1<\ldots< P_l\leqslant K_l=G,
\end{equation}
where $K_i/P_i=O_{\pi'}(G/P_i)$ and $P_{i+1}/K_i=O_\pi(G/K_i)$, $i=0,\ldots l$, are the largest normal $\pi'$-subgroup of $G/P_i$ and $\pi$-subgroup of $G/K_i$, respectively. The number $l$, the least integer such that $K_l=G$, is called the {\em$\pi$-length} of $G$ and is denoted by $l_\pi(G)$. A $\pi$-separable group is called {\em$\pi$-solvable}, if the $\pi$-factors $P_{i+1}/K_i$ are solvable groups. If $\pi=\{p\}$, then we say {\em$p$-solvable} (there is no any distinction with $p$-separability in this case), the {\em upper $p$-series}, the {\em$p$-length} $l_p(G)$ (the definitions and notations from this paragraph originate from the classical Hall--Higman paper~\cite{56HalHig}).

If $G$ is a $p$-solvable group, then $l_p(G)\leq d_p(G)$; it was proved in \cite{56HalHig} for odd $p$ and in~\cite{81Br} for $p=2$. Thus, the $p$-length of a $p$-solvable group is also bounded from above by $\epsilon_p(G)$.

If $G$ is a $\pi$-solvable group, then the complete analog of the famous theorem of P. Hall holds for Hall $\pi$-subgroups of~$G$: they exist, are conjugate to each other, and every $\pi$-subgroup is contained in some Hall $\pi$-subgroup, see, e.g. \cite[Subsection 6.3]{68Gor}. Moreover, the inequality $l_\pi(G)\leq d_\pi(G)$, where $d_\pi(G)$ is the derived length of a Hall $\pi$-subgroup, was also established provided $2\not\in\pi$, see \cite{93ChP, 96Kaz}. However, this inequality cannot be converted in the inequality between the $\pi$-length of $G/Z(G)$ and $\epsilon_\pi(G)$, as the following simple example shows.

\begin{example}\label{e:1}
Suppose that $G$ is a nonabelian group of order $pq$, where $p$ and $q$ are primes, and $\pi=\{p,q\}$. Then $d_\pi(G/Z(G))=d(G)=2$ but $\epsilon_\pi(G)=1$.
\end{example}

Nevertheless, as our second main result shows, the $\pi$-length of $G/Z(G)$ can be bounded by $\epsilon_\pi(G)$ directly, even if $G$ is just a $\pi$-separable group.

\begin{theorem}\label{t:pilength}
If $G$ is a finite $\pi$-separable group, then $l_\pi(G/Z(G))\leq\epsilon_\pi(G)$.
\end{theorem}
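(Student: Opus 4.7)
The plan is to argue by induction on $|G|$. Set $\bar G = G/Z(G)$ and let $1 = \bar P_0 \leq \bar K_0 < \bar P_1 < \cdots < \bar P_l \leq \bar K_l = \bar G$ be the upper $\pi$-series of $\bar G$; we may assume $l := l_\pi(\bar G) \geq 1$, the case $l = 0$ being trivial. The base case $l = 1$ is immediate from Theorem~\ref{t:frattini}: if $\epsilon_\pi(G) = 0$, then $\epsilon_p(G) = 0$ for every $p \in \pi$, and Theorem~\ref{t:frattini} forces the Sylow $p$-subgroup of $\bar G$ to be trivial for every such $p$, contradicting $l \geq 1$.

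For $l \geq 2$, let $N, R \trianglelefteq G$ denote the preimages in $G$ of $\bar P_1$ and $\bar K_0$, respectively. The first key step is to apply the inductive hypothesis to the smaller group $G/N$. To obtain the sharp bound $l_\pi((G/N)/Z(G/N)) = l - 1$, one observes that $O_\pi(\bar G/\bar P_1) = 1$: any normal $\pi$-subgroup $\bar M/\bar P_1$ of $\bar G/\bar P_1$ has $\bar M/\bar K_0$ a $\pi$-group contained in $O_\pi(\bar G/\bar K_0) = \bar P_1/\bar K_0$, hence $\bar M = \bar P_1$. Consequently $Z(G/N) = Z(\bar G/\bar P_1)$ has trivial $\pi$-part and sits inside $\bar K_1/\bar P_1$; quotienting by it does not change the $\pi$-length. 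Induction then yields an element $\bar z \in G/N$ with $\epsilon_\pi(\bar z) \geq l - 1$; since $l - 1 \geq 1$, $\bar z \neq 1$, so any lift $z \in G$ lies outside $N$.

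The second key step shows that $z$ contributes one extra $\pi$-prime to its index via its action on $N/R$. The crucial input is the Fitting-type inclusion $C_{\bar G/\bar K_0}(\bar P_1/\bar K_0) \leq \bar P_1/\bar K_0$, valid for any $\pi$-separable group with trivial $\pi'$-core, which forces $z$ to act non-trivially on the $\pi$-group $N/R$, so that $[N/R : C_{N/R}(zR)] > 1$ is a $\pi$-number. Combining the factorization $|z^G| = [G : C_G(z) N] \cdot [N : C_N(z)]$ with the inequality $|\bar z^{G/N}| \leq [G : C_G(z) N]$ and the vanishing $[R : C_R(z)]_\pi = 1$ (because $R/Z(G)$ is $\pi'$ and $Z(G) \leq C_R(z)$) yields
\[
|z^G|_\pi \;\geq\; |\bar z^{G/N}|_\pi \cdot [N/R : C_{N/R}(zR)],
\]
and hence $\epsilon_\pi(z) \geq (l-1) + 1 = l$. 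The main obstacle is verifying the Fitting-type inclusion in the full $\pi$-separable setting: the $\pi$-layer $\bar P_1/\bar K_0$ need not be solvable and Hall complements need not be available, so one proves it by showing that both the $\pi'$-core and the $\pi$-core of the centralizer are characteristic in it and hence normal in the ambient group, forcing them into $\bar P_1/\bar K_0$, after which Schur--Zassenhaus applied to the residual $(\pi,\pi')$-extension inside $Z(\bar P_1/\bar K_0)$ kills any leftover contribution.
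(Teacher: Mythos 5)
Your proof is correct, but it is organized differently from the paper's. The paper inducts on $|G|$ using the upper $\pi$-series of $G$ itself, splitting into two cases according to whether $O_{\pi'}(G)=1$: in the first case it quotients by $P_1=O_\pi(G)$ and invokes Lemma~\ref{l:indineq} (every element outside a self-centralizing normal $\pi$-subgroup loses $\pi$-index in the quotient, so $\epsilon_\pi(G/P_1)<\epsilon_\pi(G)$); in the second case it passes to $G/O_{\pi'}(G)$ and, in the degenerate subcase, finishes with Camina's lemma \cite{72Camina}. You instead work with the upper $\pi$-series of $G/Z(G)$, quotient by the preimage $N$ of its first $\pi$-term, and lift a single element of maximal $\pi$-index from $G/N$, showing the lift gains at least one unit of $\pi$-exponent from its action on $N/R$; this folds the $\pi'$-bottom layer into the index computation and avoids the case split, and your base case appeals to Theorem~\ref{t:frattini} where the paper uses Camina's lemma. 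Both arguments run on the same engine, the Hall--Higman Lemma~1.2.3 for $\pi$-separable groups ($C_{G/O_{\pi'}(G)}(O_\pi(G/O_{\pi'}(G)))\leq O_\pi(G/O_{\pi'}(G))$); your "witness-lifting" version is the pointwise counterpart of the paper's "global" Lemma~\ref{l:indineq}, and what it buys you is a single uniform inductive step at the cost of a slightly more delicate index factorization $|z^G|=|G:C_G(z)N|\cdot|N:C_N(z)|$. One remark: your closing paragraph treats the Fitting-type inclusion as the main obstacle and sketches a proof via characteristic cores and Schur--Zassenhaus; that sketch is essentially the standard argument, but the result is classical for $\pi$-separable groups (no solvability of the $\pi$-layers is needed) and the paper simply cites it as Lemma~\ref{l:HH} from \cite[Theorem~6.3.2]{68Gor}, so you could have done the same.
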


It is worth noting that each of the main results of the paper becomes false if one tries to replace $G/Z(G)$ by $G$ in the left part of the corresponding inequality. To see this, it suffices to take an abelian group $G$ whose order has a nontrivial $\pi$-part.

\section{The Frattini length and maximum $p$-index}\label{s:fp}

In this section, we prove Theorem~\ref{t:frattini}. We begin with two properties of conjugacy classes that are readily seen.

\begin{lemma}\label{l:fixP}
Let $P$ be a Sylow $p$-subgroup of a group $G$, and $x\in G$. If $\epsilon_p(x)=m$, then there is an element $z\in x^G$ such that $|P:P\cap C_G(z)|=p^m$.
\end{lemma}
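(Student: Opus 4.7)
The statement amounts to observing that the $p$-part of $|G:C_G(x)|$ can be realized as the index of a specific intersection inside any chosen Sylow $p$-subgroup $P$, after replacing $x$ by a suitable conjugate. I would proceed in two steps.

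First, I would fix a Sylow $p$-subgroup that automatically witnesses the index. By hypothesis, $|G:C_G(x)|_p = p^m$. Pick a Sylow $p$-subgroup $Q$ of $C_G(x)$ and embed it into a Sylow $p$-subgroup $R$ of $G$. Comparing orders gives $|R| = |G|_p = |G:C_G(x)|_p \cdot |C_G(x)|_p = p^m|Q|$, so $|R:Q| = p^m$. Moreover, $R \cap C_G(x)$ is a $p$-subgroup of $C_G(x)$ that contains the Sylow $p$-subgroup $Q$, hence coincides with $Q$. Therefore $|R : R \cap C_G(x)| = p^m$.

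Second, I would transport this identity from $R$ to the given Sylow subgroup $P$ by conjugation. By Sylow's theorem there exists $h \in G$ with $hRh^{-1} = P$. Setting $z = hxh^{-1} \in x^G$ we have $C_G(z) = hC_G(x)h^{-1}$ and $P \cap C_G(z) = h(R \cap C_G(x))h^{-1}$, so the index is preserved and $|P : P \cap C_G(z)| = p^m$.

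The whole argument is essentially Sylow-theoretic bookkeeping (consistent with the author's description of the statement as ``readily seen''), and I do not anticipate any real obstacle beyond keeping conjugation conventions straight when moving from $R$ to $P$.
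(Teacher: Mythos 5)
Your proof is correct and follows essentially the same route as the paper's: both realize the $p$-part of $|G:C_G(x)|$ as $|R:R\cap C_G(x)|$ for a well-chosen Sylow $p$-subgroup $R$ of $G$ (the paper takes $R$ maximizing $|R\cap C_G(x)|$, you build $R$ over a Sylow $p$-subgroup of $C_G(x)$ — the same fact in two guises) and then transport to $P$ by conjugating both $R$ and $x$ simultaneously. No gaps.
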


\begin{proof} Let us take a Sylow $p$-subgroup $Q$ of $G$ maximal with respect to the size of the intersection $Q\cap C_G(x)$. Since $|G|_p=|Q|$ and $|C_G(x)|_p=|Q\cap C_G(x)|$, it follows that $|Q:Q\cap C_G(x)|=|G:C_G(x)|_p=p^m$. By the Sylow theorem, there is $y\in G$ with $P=Q^y$. Therefore, for $z=x^y$,
$
|P:P\cap C_G(z)|=|Q^y:Q^y\cap C_G(x^y)|=|Q:Q\cap C_G(x)|=p^m.
$
\end{proof}

\begin{lemma}\label{l:reps}
If $\{z_1,\ldots,z_s\}$ is a complete set of representatives of
conjugacy classes of a group $G$, then $G=\langle z_1,\ldots,z_s\rangle$.
\end{lemma}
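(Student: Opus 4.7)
The plan is to let $H=\langle z_1,\ldots,z_s\rangle$ and show $H=G$ by arguing that $G$ cannot be covered by the conjugates of a proper subgroup.

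First I would observe the following: since $\{z_1,\ldots,z_s\}$ is a complete set of class representatives, every element $g\in G$ can be written as $g=z_i^{x}$ for some index $i$ and some $x\in G$; hence $g\in H^{x}$. It follows that
\[
G=\bigcup_{x\in G} H^{x}.
\]

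Next I would invoke (or, if preferred, reprove in one line) the classical fact that a finite group is never the union of the conjugates of a proper subgroup. The quick counting argument is: the number of distinct conjugates of $H$ equals $[G:N_G(H)]\leq [G:H]$, and any two conjugates share the identity, so
\[
\Bigl|\bigcup_{x\in G} H^{x}\Bigr|\leq 1+[G:N_G(H)](|H|-1)\leq 1+|G|-[G:H].
\]
If $H$ were proper, the right-hand side would be strictly less than $|G|$, contradicting the displayed equality above. Therefore $H=G$.

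There is essentially no obstacle here; the only ingredient beyond bookkeeping is the covering lemma, and the rest is unwinding the definition of conjugacy class representatives. If desired, one can phrase the argument entirely within the lemma's proof without citing the covering lemma by name, as the counting step takes one line.
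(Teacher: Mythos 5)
Your proof is correct. The paper itself gives no argument for this lemma, only the citation ``It was noted by Burnside in \cite[\S~26]{BurnsideBook}''; the classical fact you use --- that a finite group is never the union of the conjugates of a proper subgroup --- together with your one-line counting argument is exactly the standard (and essentially Burnside's) proof, so your write-up simply supplies the details the paper delegates to the reference.
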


\begin{proof} It was noted by Burnside in~\cite[\S~26]{BurnsideBook}.
\end{proof}

Now we turn to the necessary properties of the Frattini series. Recall that for a $p$-group~$G$, the Frattini subgroup $\Phi(G)$ of $G$ can be also defined as $\Phi(G)=[G,G]G^p$, where $[G,G]$ is the derived subgroup of $G$ and $G^p=\langle x^p : x \in G\rangle$, see, e.g., \cite[Defintion~4.6]{98Kh} or \cite[Theorem~5.1.3]{68Gor}.

\begin{lemma}\label{l:de}
$d_p(G)\leq\phi_p(G)$ and $e_p(G)\leq\phi_p(G)$.
\end{lemma}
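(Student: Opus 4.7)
The plan is to fix a Sylow $p$-subgroup $P$ of $G$ and set $m=\phi_p(G)$, so that $\Phi_m(P)=1$ by definition. Both inequalities will then follow once we establish the following two nesting relations by induction on $i\geq 0$:
\begin{equation*}
P^{(i)}\leq \Phi_i(P) \qquad\text{and}\qquad \{x^{p^i}:x\in P\}\subseteq \Phi_i(P),
\end{equation*}
where $P^{(i)}$ denotes the $i$-th term of the derived series. Taking $i=m$ will yield $P^{(m)}=1$ (so $d_p(G)\leq m$) and $x^{p^m}=1$ for every $x\in P$ (so the exponent of $P$ divides $p^m$, whence $e_p(G)\leq m$).

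For the induction, the base case $i=0$ is trivial, since $\Phi_0(P)=P$ contains both $P^{(0)}=P$ and every element $x=x^{p^0}$. For the inductive step, the key tool is the identity $\Phi(H)=[H,H]H^p$ valid for any $p$-group $H$, which the excerpt already recalls. Applying it to $H=\Phi_i(P)$, I obtain
\begin{equation*}
\Phi_{i+1}(P)=\Phi(\Phi_i(P))=[\Phi_i(P),\Phi_i(P)]\,\Phi_i(P)^p.
\end{equation*}
If $P^{(i)}\leq \Phi_i(P)$, then $P^{(i+1)}=[P^{(i)},P^{(i)}]\leq [\Phi_i(P),\Phi_i(P)]\leq \Phi_{i+1}(P)$. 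Similarly, if $x\in P$ and $x^{p^i}\in \Phi_i(P)$, then $x^{p^{i+1}}=(x^{p^i})^p\in \Phi_i(P)^p\leq \Phi_{i+1}(P)$, completing the induction.

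There is no real obstacle here: the whole argument is a straightforward two-line induction once the formula $\Phi(H)=[H,H]H^p$ for $p$-groups is in hand. The only thing to be careful about is to formally invoke this identity at each Frattini layer $\Phi_i(P)$ (which is itself a $p$-group as a subgroup of $P$), rather than only at $P$ itself.
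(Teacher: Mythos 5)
Your proof is correct and follows essentially the same route as the paper: the paper also deduces inductively from the identity $\Phi(H)=[H,H]H^p$ that $P^{(m)}P^{p^m}\leq\Phi_m(P)$, which is exactly your two nesting relations combined. Your write-up merely spells out the inductive step that the paper leaves implicit.
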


\begin{proof} Suppose that $P$ is a Sylow $p$-subgroup of a group $G$. If $P^{(m)}=[P^{(m-1)},P^{(m-1)}]$ is $m$th member of the derived series of $P$, where $P^{(0)}=P$, and $P^{p^m}=\langle x^{p^m}:x\in P\rangle$, then the later definition of the Frattini subgroup of a $p$-group implies inductively that $P^{(m)}P^{p^m}\leq\Phi_m(P)$, and we are done.
\end{proof}

\begin{lemma}\label{l:frattini}
If $H$ is a subgroup of a $p$-group $G$, then $\Phi(H)$ is a subgroup of $\Phi(G)$.
\end{lemma}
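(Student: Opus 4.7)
The plan is to exploit the characterization of the Frattini subgroup of a $p$-group recalled just before Lemma~\ref{l:de}, namely that $\Phi(P)=[P,P]P^p$ for any $p$-group $P$. Since $H$ is a subgroup of the $p$-group $G$, the group $H$ itself is a $p$-group, so both $\Phi(G)=[G,G]G^p$ and $\Phi(H)=[H,H]H^p$ are available.

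Next I would verify the two termwise inclusions. For the commutator part, every generator $[h_1,h_2]$ with $h_1,h_2\in H$ is also a commutator of two elements of $G$, hence lies in $[G,G]$; this gives $[H,H]\leq [G,G]$. For the $p$-power part, every generator $h^p$ with $h\in H$ is the $p$-th power of an element of $G$, so $H^p\leq G^p$. Combining, $\Phi(H)=[H,H]H^p\leq [G,G]G^p=\Phi(G)$.

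There is essentially no obstacle here: the statement is an immediate consequence of the commutator/power description of the Frattini subgroup for $p$-groups. The only point one must be careful about is that this description is valid exactly because we are inside a $p$-group (in general, $\Phi(H)\leq\Phi(G)$ may fail), so I would make that hypothesis explicit in the first line of the proof and then conclude in one display as above.
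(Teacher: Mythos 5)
Your proof is correct and follows exactly the paper's own argument: both use the identity $\Phi(P)=[P,P]P^p$ for $p$-groups and the evident inclusions $[H,H]\leq[G,G]$ and $H^p\leq G^p$. Nothing further is needed.
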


\begin{proof}
It is clear that $[H,H]\leq[G,G]$ and $H^p\leq G^p$, so $\Phi(H)=[H,H]H^p\leq[G,G]G^p\leq\Phi(G)$, as required.
\end{proof}

\begin{lemma}\label{l:indexandfrat}
If $H$ is a subgroup of a $p$-group $G$ with $|G:H|=p^m$, then $\Phi_m(G)\leq H$.
\end{lemma}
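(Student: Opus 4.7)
The plan is to proceed by induction on $m$. For $m=0$ we have $H=G$ and $\Phi_0(G)=G$, so the inclusion $\Phi_0(G)\leq H$ holds trivially. Assume $m\geq 1$ and that the statement has been established for all smaller exponents. Since $H$ is then a proper subgroup of the finite group $G$, it is contained in some maximal subgroup $M$ of $G$; because $G$ is a $p$-group, every maximal subgroup has index $p$, and hence $|G:M|=p$ and $|M:H|=p^{m-1}$. Applying the inductive hypothesis to the pair $H\leq M$ inside the $p$-group $M$ then gives $\Phi_{m-1}(M)\leq H$.

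It remains to verify the inclusion $\Phi_m(G)\leq\Phi_{m-1}(M)$. From the recursive definition of the Frattini series one reads off the identity $\Phi_m(G)=\Phi_{m-1}(\Phi(G))$, and since $M$ is maximal in $G$ the Frattini subgroup satisfies $\Phi(G)\leq M$. The remaining ingredient I would record is the monotonicity assertion that whenever $K\leq L$ are $p$-groups, $\Phi_j(K)\leq\Phi_j(L)$ holds for every $j\geq 0$. This is a short induction on $j$: the case $j=0$ is the hypothesis $K\leq L$, and the step
$$\Phi_{j+1}(K)=\Phi(\Phi_j(K))\leq\Phi(\Phi_j(L))=\Phi_{j+1}(L)$$
is exactly Lemma~\ref{l:frattini} applied to the inclusion $\Phi_j(K)\leq\Phi_j(L)$. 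Taking $K=\Phi(G)$, $L=M$ and $j=m-1$ yields $\Phi_m(G)=\Phi_{m-1}(\Phi(G))\leq\Phi_{m-1}(M)\leq H$, completing the induction.

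No genuine obstacle is anticipated: the only mildly subtle point is the iterated monotonicity of $\Phi_j(\cdot)$ along subgroup inclusions of $p$-groups, which is an immediate consequence of Lemma~\ref{l:frattini}, and the existence of a maximal overgroup of $H$ with index exactly $p$ is the standard fact that maximal subgroups of a finite $p$-group have index $p$.
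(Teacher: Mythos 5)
Your proof is correct, and it is a mirror image of the paper's argument rather than an exact copy. The paper also inducts on $m$ through an intermediate subgroup of index $p$, but it places that subgroup at the bottom of the chain: it chooses $K$ with $H\leq K\leq G$ and $|K:H|=p$, applies the inductive hypothesis to the pair $G\geq K$ to get $\Phi_{m-1}(G)\leq K$, uses maximality of $H$ in $K$ to get $\Phi(K)\leq H$, and then needs only a single application of Lemma~\ref{l:frattini} to conclude $\Phi_m(G)=\Phi(\Phi_{m-1}(G))\leq\Phi(K)\leq H$. You instead place the intermediate subgroup at the top (a maximal $M\geq H$ of index $p$ in $G$), apply the inductive hypothesis to the pair $M\geq H$, and then must bridge the gap $\Phi_m(G)\leq\Phi_{m-1}(M)$ via two auxiliary facts: the shift identity $\Phi_m(G)=\Phi_{m-1}(\Phi(G))$ and the iterated monotonicity $\Phi_j(K)\leq\Phi_j(L)$ for $K\leq L$. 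Both of these are correct and you justify them properly (the monotonicity by a clean induction on $j$ from Lemma~\ref{l:frattini}), so the argument goes through; the paper's ordering is just marginally leaner, trading your two small lemmas for the single observation that $H$ is maximal in $K$.
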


\begin{proof} Induction on $m$. The claim is clear when $m=0$ and $H=G$. So we may assume that $m>0$ and $H$ is a proper subgroup of $G$. Since $G$ is a $p$-group, there is a subgroup $K$ of $G$ such that $H$ is a subgroup of $K$ and $|K:H|=p$. Then $|G:K|=p^{m-1}$ and the inductive hypothesis implies that $\Phi_{m-1}(G)\leq K$. Since $H$ is a maximal subgroup of $K$, it follows that $\Phi(K)\leq H$. Now Lemma~\ref{l:frattini} yields $\Phi_m(G)=\Phi(\Phi_{m-1}(G))\leq\Phi(K)\leq H$, and we are done.
\end{proof}

\begin{lemma}\label{l:fratind}
If $H$ is a $p$-subgroup of a group $G$, and $Z=Z(G)$, then $\Phi(HZ/Z)=\Phi(H)Z/Z$.
\end{lemma}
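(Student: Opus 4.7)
The plan is to reduce everything to the characterization $\Phi(P)=[P,P]P^p$ valid in any $p$-group $P$ (which the paper has already recalled), together with the elementary fact that forming commutator subgroups and $p$-th-power subgroups is preserved under surjective homomorphisms.

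First I would verify that $HZ/Z$ is itself a $p$-group: since $Z=Z(G)$ is normal in $G$, the product $HZ$ is a subgroup of $G$, and the second isomorphism theorem gives $HZ/Z\cong H/(H\cap Z)$, a quotient of the $p$-group $H$. Consequently the formula $\Phi(HZ/Z)=[HZ/Z,HZ/Z]\cdot(HZ/Z)^p$ is applicable.

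Next, let $\pi\colon HZ\to HZ/Z$ denote the canonical projection, so that $\pi(H)=HZ/Z$. Since $\pi$ sends commutators to commutators and $p$-th powers to $p$-th powers, and since a subgroup generated by a set is mapped to the subgroup generated by the image, one obtains $[\pi(H),\pi(H)]=\pi([H,H])=[H,H]Z/Z$ and $(\pi(H))^p=\pi(H^p)=H^pZ/Z$. Multiplying these two subgroups then yields
$$
\Phi(HZ/Z)=\bigl([H,H]Z/Z\bigr)\bigl(H^pZ/Z\bigr)=[H,H]H^p\,Z/Z=\Phi(H)Z/Z,
$$
which is the claim.

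I do not anticipate any real obstacle: the whole content of the lemma is the compatibility of the Frattini subgroup of a $p$-group with passage modulo $Z$, and this reduces to routine generator-chasing once one has confirmed that $HZ/Z$ is still a $p$-group so that the formula $\Phi(P)=[P,P]P^p$ can be invoked.
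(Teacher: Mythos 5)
Your proof is correct and essentially the same as the paper's: both reduce to the identity $\Phi(P)=[P,P]P^p$ and verify that $[HZ/Z,HZ/Z]=[H,H]Z/Z$ and $(HZ/Z)^p=H^pZ/Z$ before multiplying. The only cosmetic difference is that you push these subgroups forward along the projection $\pi$ using $\pi(H)=HZ/Z$, while the paper computes $[HZ,HZ]=[H,H]$ and $(HZ)^p=H^pZ^p$ directly from the centrality of $Z$; your version incidentally works for any normal subgroup in place of $Z$, but nothing in the paper needs that extra generality.
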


\begin{proof} Since $[HZ,HZ]=[H,H]$ and $(HZ)^p=H^pZ^p$, it follows that $[HZ/Z,HZ/Z]=[H,H]Z/Z$ and $(HZ/Z)^p=H^pZ/Z$. Therefore,
$$
\Phi(HZ/Z)=[HZ/Z,HZ/Z](HZ/Z)^p=[H,H]H^pZ/Z=\Phi(H)Z/Z,
$$ as required.
\end{proof}

\begin{lemma}\label{l:fratindm}
Let $P$ be a $p$-subgroup of a group $G$, and $Z=Z(G)$. Then $\Phi_m(PZ/Z)=\Phi_m(P)Z/Z$.
\end{lemma}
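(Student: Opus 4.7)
The plan is to proceed by induction on $m$, using Lemma~\ref{l:fratind} as the key ingredient for the inductive step.

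The base case $m=0$ is immediate since $\Phi_0(PZ/Z)=PZ/Z=\Phi_0(P)Z/Z$ by the very definition of $\Phi_0$. For the inductive step, I would assume that $\Phi_{m-1}(PZ/Z)=\Phi_{m-1}(P)Z/Z$ and observe that, by the definition of the iterated Frattini subgroup,
$$
\Phi_m(PZ/Z)=\Phi\bigl(\Phi_{m-1}(PZ/Z)\bigr)=\Phi\bigl(\Phi_{m-1}(P)Z/Z\bigr).
$$
Since $\Phi_{m-1}(P)$ is a characteristic subgroup of the $p$-subgroup $P$, it is itself a $p$-subgroup of $G$, so Lemma~\ref{l:fratind} applies with $H=\Phi_{m-1}(P)$ and yields
$$
\Phi\bigl(\Phi_{m-1}(P)Z/Z\bigr)=\Phi\bigl(\Phi_{m-1}(P)\bigr)Z/Z=\Phi_m(P)Z/Z,
$$
completing the induction.

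There is essentially no obstacle here: the whole argument is a one-line induction once Lemma~\ref{l:fratind} is in place. The only point one should be careful about is to verify that the hypothesis of Lemma~\ref{l:fratind} is met at each step, i.e., that $\Phi_{m-1}(P)$ is still a $p$-subgroup of $G$, which is clear because it is a subgroup of the $p$-group $P$.
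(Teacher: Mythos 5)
Your proof is correct and follows exactly the same route as the paper: induction on $m$ with Lemma~\ref{l:fratind} applied to $H=\Phi_{m-1}(P)$ in the inductive step. The extra remark that $\Phi_{m-1}(P)$ remains a $p$-subgroup is a harmless bit of added care.
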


\begin{proof} Induction on $m$. It is clear for $m=0$. Now $m>0$ and, by the inductive hypothesis, $\Phi_{m-1}(PZ/Z)=\Phi_{m-1}(P)Z/Z$. Applying Lemma~\ref{l:fratind} for $H=\Phi_{m-1}(P)$, we have
$$
\Phi_m(PZ/Z)=\Phi(\Phi_{m-1}(PZ/Z))=\Phi(\Phi_{m-1}(P)Z/Z)=\Phi(\Phi_{m-1}(P))Z/Z=\Phi_m(P)Z/Z,
$$
as required.
\end{proof}

We are in position to finish the proof.\medskip

{\em Proof of  Theorem~{\rm\ref{t:frattini}}}. Let $\epsilon_p(G)=m$. Then for every $x\in G$, $|x^G|_p=|G:C_G(x)|_p\leq p^m$.

Let us fix some Sylow $p$-subgroup $P$ of $G$. By Lemma~\ref{l:fixP}, there is $z\in x^G$ with
$$
|P : P\cap C_G(z)|\leq p^m.
$$
By Lemma~\ref{l:indexandfrat},
$$
\Phi_m(P)\leq P\cap C_G(z)\leq C_G(z).
$$
Since this is true for all $x\in G$, Lemma~\ref{l:reps} implies that one can take elements $z_1,\ldots,z_s$ of $G$ such that $G=\langle z_1,\ldots, z_s\rangle$ and $\Phi_m(P)\leq C_G(z_i)$ for every $i=1,\ldots,s$. It follows that $\Phi_m(P)\leq Z$, where $Z=Z(G)$. Lemma~\ref{l:fratindm} yields
$$
\Phi_m(PZ/Z)=\Phi_m(P)Z/Z=1,
$$
and we are done.

\section{The $\pi$-length and maximum $\pi$-index}\label{s:pi}

The goal of this section is to prove Theorem~\ref{t:pilength}.

\begin{lemma}\label{l:cenineq}
Let $N$ be a normal subgroup of a group $G$, and $\ov{G}=G/N$. If $\ov{x}$ is an image of an element $x\in G$ in $\ov{G}$, then $|\ov{x}^{\ov{G}}|$ divides $|x^G|$. Moreover, if $C_G(x)\cap N$ is a proper subgroup of~$N$, then $|\ov{x}^{\ov{G}}|$ is a proper divisor of $|x^G|$.
\end{lemma}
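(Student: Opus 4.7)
The plan is to compute both indices via centralizers and compare them by introducing the preimage of $C_{\overline{G}}(\overline{x})$ in $G$.

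Set $C = C_G(x)$ and let $C^* = \{g \in G : \overline{g} \in C_{\overline{G}}(\overline{x})\}$ be the full preimage of $C_{\overline{G}}(\overline{x})$ under the canonical projection $G \to \overline{G}$. By the definition of the centralizer in $\overline{G}$, we have $C^* = \{g \in G : [g,x] \in N\}$. Two basic inclusions hold: first, $C \leq C^*$, since centralizing $x$ in $G$ forces $[g,x] = 1 \in N$; second, $N \leq C^*$, since $\overline{n} = 1$ centralizes $\overline{x}$ for every $n \in N$. Also, by the correspondence theorem, $|C^*/N| = |C_{\overline{G}}(\overline{x})|$, so $|\overline{x}^{\overline{G}}| = |\overline{G} : C_{\overline{G}}(\overline{x})| = |G : C^*|$.

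For the first assertion, since $C \leq C^*$, the index $|G:C^*|$ divides $|G:C|$, that is,
$$
|x^G| = |G:C| = |G:C^*| \cdot |C^*:C| = |\overline{x}^{\overline{G}}| \cdot |C^*:C|,
$$
which proves that $|\overline{x}^{\overline{G}}|$ divides $|x^G|$.

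For the moreover part, suppose $C_G(x) \cap N = C \cap N$ is a proper subgroup of $N$. Then there exists $n \in N \setminus C$. Since $N \leq C^*$, we have $n \in C^* \setminus C$, so the inclusion $C \leq C^*$ is strict, and therefore $|C^*:C| > 1$. The displayed equation above then gives $|x^G| > |\overline{x}^{\overline{G}}|$, i.e., $|\overline{x}^{\overline{G}}|$ is a proper divisor of $|x^G|$, as required. There is no real obstacle here; the only thing to get right is the bookkeeping around the preimage $C^*$ and the two elementary inclusions $C \leq C^*$ and $N \leq C^*$.
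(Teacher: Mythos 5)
Your proof is correct and follows essentially the same route as the paper: both hinge on the containment of $C_G(x)$ (equivalently, its image $C_G(x)N/N$) in $C_{\overline{G}}(\overline{x})$, with the properness in the second claim coming from the index $|N:C_G(x)\cap N|>1$; you merely phrase this via the full preimage $C^*$ rather than the image. The bookkeeping with $C\leq C^*$ and $N\leq C^*$ is sound.
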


\begin{proof} It is clear that $C_{\ov{G}}(\ov{x})$ includes $C_G(x)N/N$. Therefore,
$$
|\ov{x}^{\ov{G}}|=|\ov{G}:C_{\ov{G}}(\ov{x})|\mbox{ divides }|G/N:C_G(x)N/N=|G:C_G(x)|/|N:C_G(x)\cap N|\mbox{ divides }|x^G|,
$$
and $|\ov{x}^{\ov{G}}|$ is a proper divisor of $|x^G|$ provided $C_G(x)\cap N$ is a proper subgroup of $N$.
\end{proof}

\begin{lemma}\label{l:indineq}
Let $N$ be a normal subgroup of a group $G$, and $\ov{G}=G/N$. Then $\epsilon_\pi(\ov{G})\leq\epsilon_\pi(G)$. If $N$ is also a proper $\pi$-subgroup of $G$ with $C_G(N)\leq N$, then $\epsilon_\pi(\ov{G})<\epsilon_\pi(G)$.
\end{lemma}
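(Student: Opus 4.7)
The first inequality $\epsilon_\pi(\ov G)\le\epsilon_\pi(G)$ I would deduce directly from the first half of Lemma~\ref{l:cenineq}: for every $\ov y\in\ov G$ and any lift $y\in G$, $|\ov y^{\ov G}|$ divides $|y^G|$, so $\exp_\pi(|\ov y^{\ov G}|)\le\exp_\pi(|y^G|)$; maximizing over $\ov y$ delivers the bound.

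For the strict inequality, the plan is to exhibit a \emph{non-identity} element $\ov x\in\ov G$ that still realizes the maximum $\epsilon_\pi(\ov G)$ and then feed a lift $x\in G$ into the sharper second half of Lemma~\ref{l:cenineq}. Such a non-identity maximizer should always exist: if $\epsilon_\pi(\ov G)>0$, the identity element cannot be a maximizer since $\epsilon_\pi(\ov 1)=0$, so any maximizer is automatically non-identity; if $\epsilon_\pi(\ov G)=0$, then every element of $\ov G$ is trivially a maximizer, and a non-identity one is available because $N$ is proper in $G$, so $\ov G\ne 1$.

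Given such $\ov x$, any lift $x\in G$ lies outside $N$, and combined with $C_G(N)\le N$ this forces $x\notin C_G(N)$, i.e.\ $N\not\le C_G(x)$. Thus $N\cap C_G(x)$ is a proper subgroup of the $\pi$-group $N$, so $k:=|N:N\cap C_G(x)|$ is itself a $\pi$-number with $\exp_\pi(k)\ge 1$. Lemma~\ref{l:cenineq} then gives that $k\cdot|\ov x^{\ov G}|$ divides $|x^G|$, and since $\exp_\pi$ is monotone under divisibility and additive on products, I would conclude
$$
\epsilon_\pi(x)=\exp_\pi(|x^G|)\ge\exp_\pi(k)+\exp_\pi(|\ov x^{\ov G}|)\ge 1+\epsilon_\pi(\ov x)=1+\epsilon_\pi(\ov G),
$$
so $\epsilon_\pi(G)>\epsilon_\pi(\ov G)$, as required. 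The only subtlety I anticipate is arranging for the non-identity maximizer $\ov x$ (hence the small case analysis above); once that is done, the hypothesis $C_G(N)\le N$ together with the fact that $N$ is a $\pi$-group plug directly into Lemma~\ref{l:cenineq} with no further group-theoretic machinery.
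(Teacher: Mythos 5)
Your proposal is correct and follows essentially the same route as the paper: both parts rest on Lemma~\ref{l:cenineq}, and the strict inequality comes from noting that $C_G(N)\leq N$ forces $N\cap C_G(x)<N$ for $x\notin N$, so the $\pi$-number $|N:N\cap C_G(x)|>1$ strictly drops the $\pi$-exponent. The only difference is cosmetic: you make explicit the choice of a non-identity maximizer (with the $\epsilon_\pi(\ov G)=0$ case handled separately), which the paper compresses into the remark ``Since $N<G$''; note also that, like the paper, you are really invoking the divisibility established inside the proof of Lemma~\ref{l:cenineq} rather than its bare statement, which only asserts ``proper divisor.''
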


\begin{proof} The first statement follows from the first statement of Lemma~\ref{l:cenineq} directly. Suppose that $N$ is also a proper $\pi$-subgroup of $G$ with $C_G(N)\leq N$. Then for every $x\in G\setminus N$, the intersection $N\cap C_G(x)$ is a proper subgroup of $N$. Hence $$|\ov{x}^{\ov{G}}|_{\pi}\mbox{ divides }|G:C_G(x)|_{\pi}/|N:C_G(x)\cap N|_{\pi},$$ which is a proper divisor of $|x^G|_{\pi}.$
Since $N<G$, it follows that $\epsilon_\pi(\ov{G})<\epsilon_\pi(G)$, as claimed.
\end{proof}

The next two statements are the famous Hall--Higman Lemma 1.2.3 \cite{56HalHig} reformulated for $\pi$-separable groups, see, e.g., \cite[Theorem~6.3.2]{68Gor}, and the direct corollary of this lemma.

\begin{lemma}\label{l:HH}
If $G$ be a $\pi$-separable group  and $\ov{G}=G/O_{\pi'}(G)$, then $C_{\ov{G}}(O_\pi(\ov{G}))\leq O_\pi(\ov{G})$. In particular, if $O_{\pi'}(G)=1$, then $C_G(O_\pi(G))\leq O_\pi(G)$.
\end{lemma}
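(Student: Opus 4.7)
The plan is to deduce the first assertion from the second: since $\pi$-separability passes to quotients and $O_{\pi'}(\ov{G})=1$ by the standard correspondence for $O_{\pi'}$, it suffices to prove that $C:=C_G(F)$ is contained in $F:=O_\pi(G)$ whenever $G$ is $\pi$-separable with $O_{\pi'}(G)=1$. Both $F$ and $C$ are normal in $G$, and $C\cap F=Z(F)$. A first observation is that $O_{\pi'}(C)=1$: this subgroup is characteristic in $C$, hence normal in $G$, hence contained in $O_{\pi'}(G)=1$. Since $C$ inherits $\pi$-separability from $G$, this forces $F_1:=O_\pi(C)\neq 1$ as soon as $C\neq 1$, by inspecting the bottom of the upper $\pi$-series of $C$. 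Being characteristic in $C$, the subgroup $F_1$ is normal in $G$, so $F_1\leq O_\pi(G)=F$; and since $C$ centralizes $F$, it centralizes $F_1$, giving $F_1\leq Z(C)$.

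The key step is to show $C=F_1$. Let $K/F_1=O_{\pi'}(C/F_1)$; then $K$ sits in a central extension $1\to F_1\to K\to K/F_1\to 1$ with $F_1$ a $\pi$-group and $K/F_1$ a $\pi'$-group. By Schur--Zassenhaus, the extension splits as $K=F_1\times N$ with $N$ a $\pi'$-subgroup (the product is direct because $F_1\leq Z(C)$). Now $N$ is exactly the set of $\pi'$-elements of $K$, hence characteristic in $K$, hence normal in $C$, and therefore $N\leq O_{\pi'}(C)=1$. Thus $K=F_1$, so $O_{\pi'}(C/F_1)=1$. Moreover, $O_\pi(C/F_1)=1$ by the maximality of $F_1=O_\pi(C)$, since any normal $\pi$-subgroup of $C/F_1$ lifts to a normal $\pi$-subgroup of $C$ containing $F_1$. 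Having both $O_\pi$ and $O_{\pi'}$ trivial in the $\pi$-separable group $C/F_1$ forces $C/F_1=1$. Hence $C=F_1$ is a normal $\pi$-subgroup of $G$, so $C\leq O_\pi(G)=F$.

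The main obstacle is the central-extension step, in which one has to extract a characteristic $\pi'$-complement inside $K$; the centrality $F_1\leq Z(C)$, obtained via $O_{\pi'}(G)=1$, is what makes the Hall $\pi'$-subgroup of $K$ unique and therefore characteristic. Everything else is essentially bookkeeping with the upper $\pi$-series, together with the basic fact that a nontrivial $\pi$-separable group cannot have both $O_\pi$ and $O_{\pi'}$ trivial.
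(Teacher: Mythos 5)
Your proof is correct. Note that the paper does not prove this lemma at all: it is quoted as the Hall--Higman Lemma 1.2.3 reformulated for $\pi$-separable groups, with a reference to \cite[Theorem~6.3.2]{68Gor}. So the comparison is between your argument and the textbook proof. Your reduction to the case $O_{\pi'}(G)=1$ is the standard one, but your handling of that case is organized a bit differently from Gorenstein's: instead of assuming $C=C_G(O_\pi(G))\not\leq O_\pi(G)$ and extracting a contradiction from a chief factor of $G$ sitting inside $CO_\pi(G)/O_\pi(G)$, you work entirely inside the normal subgroup $C$, showing $O_{\pi'}(C)=1$, that $F_1=O_\pi(C)$ is central in $C$, and then that $C/F_1$ has both $O_\pi$ and $O_{\pi'}$ trivial, hence is trivial. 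The engine is the same in both versions --- Schur--Zassenhaus applied to a central extension of a $\pi$-group by a $\pi'$-group, where centrality makes the complement a direct factor and hence characteristic --- and all your supporting steps check out: $O_{\pi'}(C)$ and $O_\pi(C)$ are characteristic in $C$ and hence normal in $G$; a nontrivial $\pi$-separable group cannot have both $O_\pi$ and $O_{\pi'}$ trivial (via a minimal normal subgroup); and the existence half of Schur--Zassenhaus needs no solvability hypothesis (indeed here $F_1$ is abelian, so Schur's original argument suffices). This is a clean, self-contained substitute for the citation.
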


Let $O_{\pi',\pi}(G)$ be the preimage in $G$ of $O_\pi(G/O_{\pi'}(G))$.

\begin{lemma}\label{l:center}
If $G$ is a $\pi$-separable group, then $Z(G)\leq C_G(O_{\pi',\pi}(G))\leq O_{\pi',\pi}(G)$. Moreover, if $Z(G)=O_{\pi',\pi}(G)$, then $G=Z(G)$ is abelian.
\end{lemma}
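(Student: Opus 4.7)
The plan is to push the problem into the quotient $\overline{G} = G/O_{\pi'}(G)$ and apply Lemma~\ref{l:HH} there. Setting $N = O_{\pi'}(G)$ and $M = O_{\pi',\pi}(G)$, we have by the very definition of $M$ that $M/N = O_\pi(\overline{G})$, while $O_{\pi'}(\overline{G}) = 1$ by construction. The inclusion $Z(G) \leq C_G(M)$ is immediate because any central element of $G$ centralizes every subgroup of $G$.

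For the main inclusion $C_G(M) \leq M$, I would take an arbitrary $x \in C_G(M)$ and examine its image $\overline{x}$ in $\overline{G}$. Since $x$ centralizes $M$, the image $\overline{x}$ centralizes $M/N = O_\pi(\overline{G})$. Because $\overline{G}$ is $\pi$-separable (as a quotient of a $\pi$-separable group) and $O_{\pi'}(\overline{G}) = 1$, the second assertion of Lemma~\ref{l:HH} yields $C_{\overline{G}}(O_\pi(\overline{G})) \leq O_\pi(\overline{G})$. Consequently $\overline{x} \in M/N$, that is $x \in M$, and the claimed chain of inclusions follows.

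The \emph{moreover} clause then falls out almost formally. Suppose $Z(G) = O_{\pi',\pi}(G)$. The equality $C_G(Z(G)) = G$ is automatic, so combining it with the inclusion just proved gives
$$
G = C_G(Z(G)) = C_G(O_{\pi',\pi}(G)) \leq O_{\pi',\pi}(G) = Z(G),
$$
whence $G = Z(G)$ is abelian. I do not foresee a serious obstacle; the only step requiring any care is the reduction to the quotient, where one must observe that $\pi$-separability descends from $G$ to $\overline{G}$, which is standard.
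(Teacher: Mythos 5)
Your proposal is correct and follows essentially the same route as the paper: both reduce to $\overline{G}=G/O_{\pi'}(G)$, observe that the image of $C_G(O_{\pi',\pi}(G))$ lands in $C_{\overline{G}}(O_\pi(\overline{G}))$, apply Lemma~\ref{l:HH}, and derive the \emph{moreover} clause from $G=C_G(Z(G))$. The only cosmetic difference is that you invoke the ``in particular'' form of Lemma~\ref{l:HH} in the quotient rather than its first assertion applied directly to $G$, which is immaterial.
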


\begin{proof} The first claim follows from Lemma~\ref{l:HH} and the fact that $$C_G(O_{\pi',\pi}(G))O_{\pi'}(G)/O_{\pi'}(G)\leq C_{G/O_{\pi'}(G)}(O_{\pi',\pi}(G)/O_{\pi'}(G)).$$ Suppose that $Z(G)=O_{\pi',\pi}(G)$. Then $G=C_G(Z(G))= C_G(O_{\pi',\pi}(G))\leq O_{\pi',\pi}(G)=Z(G)$, as required.
\end{proof}

We are ready to prove the theorem now.\medskip

{\em Proof of Theorem~{\rm\ref{t:pilength}}}. First, observe that the statement of the theorem is true for all sufficiently small groups $G$, even for all abelian groups, so we may proceed by induction on the order of $G$ and assume that $G$ is nonabelian.

Consider the upper $\pi$-series~\eqref{eq:r} of $G$. First, we suppose that $K_0=O_{\pi'}(G)=1$. Then $P_1=O_\pi(G)\neq1$. Lemmas~\ref{l:HH} and~\ref{l:center} imply that $C_G(P_1)\leq P_1$ and $Z(G)<P_1$. In particular, $\epsilon_\pi(G)\geq1$ and $l_\pi(G/Z(G))=l_\pi(G)$. If $G=P_1$, then $l_\pi(G/Z(G))=1$ and we are done. Thus, $G>P_1$.

Now, on the one hand, Lemma~\ref{l:indineq} yields $\epsilon_\pi(G)>\epsilon_\pi(G/P_1)$, i.e., $$\epsilon_\pi(G/P_1)+1\leq\epsilon_\pi(G).$$
On the other hand, $P_2/P_1=O_{\pi',\pi}(G/P_1)$, so Lemma~\ref{l:center} guarantees that $Z(G/P_1)<P_2/P_1$ or $G/P_1$ is abelian. If $Z(G/P_1)<P_2/P_1$, then $Z(G/P_1)\leq K_1/P_1$, so $l_\pi((G/P_1)/Z(G/P_1))=l_\pi(G/K_1)=l_\pi(G/P_1)=l_\pi(G)-1=l_\pi(G/Z(G))-1$. If $G/P_1$ is abelian, then it must be a $\pi'$-group, because $P_1\neq1$. Hence $l_{\pi}((G/P_1)/Z(G/P_1))=0=l_\pi(G/Z(G))-1$. Thus, in both cases, $$l_\pi(G/Z(G))\leq l_\pi((G/P_1)/Z(G/P_1))+1.$$

Applying the inductive hypothesis to $G/P_1$, we obtain
$$
l_\pi(G/Z(G))\leq l_\pi((G/P_1)/Z(G/P_1))+1\leq\epsilon_\pi(G/P_1)+1\leq\epsilon_\pi(G),
$$
as required.

Thus, to complete the proof of the theorem it suffices to consider the case, where $K_0\neq1$. Put $\ov{G}=G/K_0$ and note that $\ov{G}$ is nontrivial, because the theorem holds for every $\pi'$-group $G$. Observe also that $O_{\pi'}(\ov{G})=1$. We may suppose that $l_\pi(\ov{G}/Z(\ov{G}))<l_\pi(G/Z(G))$. Otherwise, by induction on the order of $G$, $l_\pi(G/Z(G))=l_\pi(\ov{G}/Z(\ov{G}))\leq\epsilon(\ov{G})\leq\epsilon(G)$, where the last inequality follows from Lemma~\ref{l:indineq}.

Put $\ov{P}_1=P_1/K_0=O_\pi(\ov{G})$. Lemma~\ref{l:center} yields $Z(\ov{G})\leq\ov{P}_1$. If $Z(\ov{G})<\ov{P}_1$, then $l_\pi(\ov{G}/Z(\ov{G}))=l_\pi(G/Z(G))$, a contradiction.
Hence $Z(\ov{G})=\ov{P}_1$. Again by Lemma~\ref{l:center}, it is possible only if $Z(\ov{G})=\ov{P}_1=\ov{G}$. In particular, $G=P_1$. Since $l_\pi(G/Z(G))\leq1$ in this case, it suffices to show that $\epsilon_\pi(x)>0$ for some element $x\in G$. Otherwise, $|G:C_G(x)|_{p}=1$ for every $x\in G$ and $p\in\pi$. Then, by virtue of \cite[Lemma~1]{72Camina}, Sylow $p$-subgroups of $G$ lie in $Z(G)$ for all $p\in\pi$. It follows that $G/Z(G)$ is a $\pi'$-group, so $l_\pi(G/Z(G))=0$, which completes the proof.
\medskip

{\bf Acknowledgments.} A-Ming Liu was supported by the Hainan Provincial Natural Science Foundation (Grant no.~423RC429) and the National Natural Science Foundation of China (No.~12101165), A. V. Vasil'ev was supported by the Sobolev Institute of Mathematics state contract (project FWNF-2022-0002). The work was also supported by the National Natural Science Foundation of China (No.~12171126).\smallskip

The authors thank A. A. Buturlakin for helpful suggestions, improving the text of the paper.

\end{document}